\theoremstyle{plain}
\newtheorem{thm}{Theorem}[section]
\theoremstyle{definition}
\newtheorem{definition}[thm]{Definition}
\newtheorem{example}[thm]{Example}
\newcommand\C{\mathbb C}
\newcommand{\PP}{\mathbb{P}}
\DeclareMathOperator{\Jac}{Jac}
\newcommand\newop[2]{\def#1{\mathop{#2}\nolimits}}
\newop\Der{Der}
\newop\mult{mult}
\newop\characteristic{char}
\newcommand\parl[2]{\partial_{#2} #1}
\def\keywordname{{\bfseries Keywords}}%
\def\keywords#1{\par\addvspace\medskipamount{\rightskip=0pt plus1cm
\def\and{\ifhmode\unskip\nobreak\fi\ $\cdot$
}\noindent\keywordname\enspace\ignorespaces#1\par}}
\def\subclassname{{\bfseries Mathematics Subject Classification
(2000)}\enspace}
\def\subclass#1{\par\addvspace\medskipamount{\rightskip=0pt plus1cm
\def\and{\ifhmode\unskip\nobreak\fi\ $\cdot$
}\noindent\subclassname\ignorespaces#1\par}}
\begin{document}

\author{Ewelina Nawara}
\title{The Hesse pencil of plane curves and osculating conics}
\date{\today}
\maketitle
\thispagestyle{empty}

\begin{abstract}

In this paper, we revisit the classical problem of determining osculating conics and sextactic points for a given algebraic curve. Our focus is on a particular family of plane cubic curves known as the Hesse pencil. By employing classical tools from projective differential geometry, we derive explicit coordinates for these special points. The resulting formulas not only clarify previous approaches but also lead to the construction of new families of free and nearly free curves, extending recent findings the freeness of curves.

\keywords{osculating conics, sextactic points, Hesse pencil, free curves} 
\subclass{14H50, 53A15, 14N05}
\end{abstract}
\section{Introduction}

The study of osculating curves was initiated by Salmon and Cayley in the XIX century, but each of them approached it with a different concept.

Salmon investigated whether, given a fixed cubic curve $C_1$, it is possible to find another cubic curve $C_2$ such that the intersection index at their points of intersection equals nine. The points where this maximum intersection index, occurs were named by Salmon as having nine-point contact. In his research, he determined that such a cubic curve contains $81$ such points, among which there are the $9$ inflection points. The equations defining these points were later detailed and analyzed by Hart in 1875 \cite{Hart}. In 1876, Halphen studied ``coincidence points'' on a plane curve. He considered a family of cubic curves that share an $8$-point contact at a given point $P_1$ with both the reference curve and each other. These cubic curves also intersect at a second point $P_2$. If $P_2$ coincides with $P_1$, then $P_1$ is called a coincidence point of the curve \cite{Halphen}. Both Salmon's and Halphen's studies focused on families of cubic curves intersecting at points with intersection index of $9$. However, this concept differs from that of nine-point contact in the so-called second Hessian context.

Cayley proposed a different approach by considering a given curve of degree at least $3$ and intersecting it with a curve of degree $2$. His studies also focused on the intersection index at the points where these curves meet, but with an intersection index of $6$. He referred to these points as \emph{sextactic points}. In \cite{Cay59}, he defined sextactic points and explained how to identify them. He also presented the equations that a conic must satisfy at these points to achieve an intersection index of $6$. Later, in \cite{Cay65}, he derived the equation of the curve that must pass through sextactic points, inflection points, and singularities, naming this curve \emph{the second Hessian}.

Since Cayley's work, mathematicians have continued to develop the theory of sextactic points. In \cite[Chapter VI, Theorem 17]{Coo}, Coolidge presented a formula for the number of sextactic points on a curve, subject to certain constraints related to the curve's genus, as well as the number of nodes and cusps of both the curve and its dual. A similar study was conducted by Thorbergsson and Umehara \cite{masaaki}, in which they established bounds for the number of sextactic points on a simple closed curve in the real projective plane. Also, some recent work has led to an increase in interest in the topic of the $2$-Hessian, osculating conics and sextactic points, see~\cite{MT,MZ,SzeSzp} for details.

Despite the classical results on sextactic points and the second Hessian, explicit computations for specific and symmetric families of curves, such as the Hesse pencil, are still relatively rare in the literature. The high degree of symmetry and rich geometric structure of this pencil make it a particularly interesting object for explicit study. Understanding how sextactic points and their associated osculating conics behave in this family may also shed new light on the geometry of the second Hessian.

In this article, we consider the Hesse pencil of plane cubic curves, i.e., curves given by the equation  
$$F_{\lambda,\mu}(x,y,z)=\lambda(x^3+y^3+z^3)+\mu xyz,$$
where $(\lambda,\mu)\in\mathbb{P}^1$. Our aim is to present detailed and self-contained computations concerning the second Hessian and the osculating conics at sextactic points within the Hesse pencil. This not only complements the classical theory with concrete examples but also helps to clarify the geometric behavior of these invariants in a highly symmetric setting.

The paper is structured as follows. In Section 2, we recall the necessary definitions, notations, and theorems required to present our results. Section 3 contains the computations needed to derive the equation of the second Hessian and the osculating conics associated with the given curve. Noting that the set of all sextactic points is closed under the action of a certain group, it suffices to compute the equation of an osculating conic for just three points. We present two distinct methods to achieve this.

The simplicity of the resulting conic equations allows us to construct free curves of a special type and to verify whether some results from \cite{dimca} also hold for special members of the Hesse pencil, as well as for a general member. As a consequence, we identify three additional cases in which free and nearly free curves can be constructed using the equation of a cubic and its osculating conics. These results are summarized in Theorem~\ref{thm:main}. These constructions contribute to the understanding of the geometric structure of curves related to the Hesse pencil.

\section{Preliminaries} 

We begin this section by recalling definitions of the second Hessian, osculating conics and sextactic points. In order to do that, we need the following notation.

Let $S$ denote a ring $\C[x,y,z]$ of three variables over complex numbers $\C$. For given polynomials $f,g,h \in S$ we denote by $\Jac(f,g,h)$  the determinant of the matrix $M_J$, i.e.
  $$ \Jac(f,g,h):=\det\begin{pmatrix}
        \partial_x f & \partial_y  f& \partial_z f\\
        \partial_x g & \partial_y g & \partial_z g\\
        \partial_x h & \partial_y h & \partial_z h 
    \end{pmatrix}=\det M_J,$$
and by $H$ we denote Hessian of the polynomial $F \in S$, namely 
   $$ H:=\Jac(F_x,F_y,F_z)=\det\begin{pmatrix}
        \partial^2_{x^2} F & \partial^2_{xy} F & \partial^2_{xz} F \\
        \partial^2_{yx} F & \partial_{y^2}^2 F& \partial_{yz}^2 F \\
        \partial_{zx}^2 F & \partial_{zy}^2 F& \partial_{z^2}^2 F    \end{pmatrix}.$$
Denote by $M_{i,j}$ the minor of a matrix $M_J$ of $i$ row and $j$ column. Consider the following two vectors: $M$, which consists of some $2 \times 2$ minors of $M_J$, i.e. $M=[M_{11},M_{22},M_{33},M_{32},M_{31},M_{21}]$, and $V_H$ which contains the second derivatives of Hessian $H$, i.e.
$$V_H=[\partial^2_{x^2}H, \;\partial^2_{y^2}H,\;\partial^2_{z^2}H,\;2\partial^2_{yz}H,\;2\partial^2_{xz}H,\;2\partial^2_{xy}H].$$
In what follows, taking the partial derivative $\partial_{\bullet}$ from a vector means calculating a partial derivative $\partial_{\bullet}$ from each entry of the vector. We introduce the notation $$\partial_{\bullet}(\Omega_1(M,V_H)):=\partial_{\bullet} M \circ V_H \;\text{ and } \;\partial_{\bullet}(\Omega_2(M,V_H)):= M \circ \partial_{\bullet} V_H,$$
which denotes the standard scalar product of two vectors with appropriate derivation.
Additionally, polynomial $\Psi$ denotes the determinant of the so-called bordered Hessian calculated for polynomials $H$ and $F$, which is 
\begin{align*}
\Psi:=-\det\begin{pmatrix}  
        0 & \partial_x H & \partial_y H & \partial_z H \\
       \partial_x H & \partial^2_{x^2}  F  & \partial^2_{xy} F & \partial^2_{xz} F \\
          \partial_y H & \partial^2_{yx} F & \partial^2_{y^2} F & \partial^2_{yz} F  \\
        \partial_z H &\partial^2_{zx} F & \partial^2_{zy} F & \partial_{z^2}^2 F   \\
    \end{pmatrix},    
\end{align*}
or, equivalently, after Laplace expansion
\begin{footnotesize}
    \begin{align} \label{eq:psi}
    \Psi:=[M_{11},M_{22},M_{33},M_{32},M_{31},M_{21}] \circ [(\partial_xH)^2,(\partial_yH)^2,(\partial_zH)^2,2 \partial_yH\partial_zH,2\partial_xH\partial_zH,2\partial_xH\partial_yH]
    \end{align}
\end{footnotesize}

Let  $P \in \PP^2_{\C}$. Following Cayley, for a fixed polynomial $F$ of degree $d$, we denote
\begin{equation}
\label{eq:DFD2F}
    \begin{aligned}
    DF_P(x,y,z)=&x\parl{F(P)}{x}+y\parl{F(P)}{y}+z\parl{F(P)}{z},\\
    D^2F_P(x,y,z)=&x^2\parl{^2F(P)}{x}+y^2\parl{^2F(P)}{y}+z^2\parl{^2F(P)}{z}
    \\&+2xy\parl{\parl{F(P)}{y}}{x}+2xz\parl{\parl{F(P)}{z}}{x}+2yz\parl{\parl{F(P)}{z}}{y}.
    \end{aligned}
\end{equation}
If the point $P$ belongs to the set of zeros of $F$, then, in fact, $DF_P=0$ defines a polar curve of rank $d-1$, or equivalently, the tangent line to $F=0$ at $P$. Analogously, $D^2F = 0 $ can be viewed as a polar curve of rank $d-2$ (see e.g. \cite[Equation 5.21]{Belta}).

The concept of sextactic points and osculating conics is a generalization of the idea of a tangent line to an inflection point.
An inflection point on a curve is a smooth point where the tangent intersects the curve with multiplicity higher than expected, while a sextactic point is a point where the osculating conic intersects the curve with multiplicity higher than expected. 
Cayley was interested in finding osculating conics, and as a result of this research he obtained the following theorem  \cite[Section 15]{Cay59}.

\begin{thm}[Osculating conics]
\label{Osculating}
Let $C$ be a plane curve of degree $d$ given by a polynomial $F$. If $P$ is a point on $C$ that is neither singular nor an inflection
point and $$9H^3\Lambda=-3 (M \circ V_H) H +4 \Psi,$$ then the conic $O_P$ given by the equation
$$O_P:D^2F_P-\left(\frac{2}{3H(P)} DH_P+\Lambda(P)DF_P\right)DF_P = 0,$$ intersects $C$ at $P$ with multiplicity at least $5$.
\end{thm}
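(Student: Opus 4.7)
The plan is to verify the claimed intersection multiplicity by computing the Taylor expansion of the left-hand side of the equation defining $O_P$ along a local analytic parametrization $\gamma(t)=(x(t),y(t),z(t))$ of $C$ with $\gamma(0)=P$ and $F(\gamma(t))\equiv 0$. Setting
\[
A(t):=DF_P(\gamma(t)),\qquad B(t):=DH_P(\gamma(t)),\qquad K(t):=D^{2}F_P(\gamma(t)),
\]
the goal is to prove
\[
O_P(\gamma(t))=K(t)-\Bigl(\tfrac{2}{3H(P)}\,B(t)+\Lambda(P)\,A(t)\Bigr)A(t)=O(t^{5}).
\]

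Three of the five required vanishings are essentially automatic. Euler's identity applied to the homogeneous forms $F$ and $H$, of degrees $d$ and $3(d-2)$ respectively, yields $A(0)=dF(P)=0$, $K(0)=d(d-1)F(P)=0$ and $B(0)=3(d-2)H(P)$, so $O_P$ passes through $P$. Differentiating $F\circ\gamma\equiv 0$ once gives $A(t)=O(t^{2})$; differentiating twice and invoking the identity which contracts the matrix of second partials of $F$ at $P$ against $P$ itself (producing $(d-1)\nabla F(P)$) shows that the $t^{2}$-coefficient of $K$ equals $2(d-2)$ times the $t^{2}$-coefficient of $A$. The normalization $\tfrac{2}{3H(P)}$ in the statement is then exactly what is needed to cancel the $t^{2}$-term of $O_P(\gamma(t))$. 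For the $t^{3}$-coefficient one differentiates $F\circ\gamma\equiv 0$ three times and substitutes; after doing so, the vanishing reduces to a classical identity among $H(P)$, $\nabla H(P)$, the matrix of second partials of $F$ at $P$ and the third partials of $F$ at $P$, valid for tangent vectors $v\cdot\nabla F(P)=0$. This identity itself follows by differentiating $H=\det(\partial^{2}F/\partial x_{i}\partial x_{j})$ once and applying Euler's identity to its cofactors.

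For the $t^{4}$-coefficient, one differentiates $F\circ\gamma\equiv 0$ a fourth time to express $\gamma^{(4)}(0)\cdot\nabla F(P)$ in terms of lower derivatives of $\gamma$ and the fourth partials of $F$ at $P$. Substituting the result into the Taylor expansion of $O_P(\gamma(t))$, the $t^{4}$-coefficient reduces to a nonzero scalar multiple of
\[
-3\bigl(M\circ V_{H}\bigr)(P)\cdot H(P)+4\Psi(P)-9\,H(P)^{3}\,\Lambda(P),
\]
so imposing the hypothesis $9H^{3}\Lambda=-3(M\circ V_{H})H+4\Psi$ forces it to vanish, giving contact of order at least $5$ at $P$, as required.

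The main obstacle will be identifying the $t^{4}$-coefficient with the combination $-3(M\circ V_{H})H+4\Psi$. For the $\Psi$-part, this relies on the Laplace expansion~\eqref{eq:psi}, which presents the bordered Hessian determinant as a quadratic form in $\nabla H(P)$ whose coefficients are the minors $M_{ij}$ of $M_J(P)$; for $M\circ V_{H}$, the same minors are paired against the vector $V_{H}$ of second derivatives of $H$. Matching the contractions produced when differentiating $B(t)A(t)$ twice against those obtained by Laplace-expanding $\det M_J(P)$ along its first row, and using $v\cdot\nabla F(P)=0$ repeatedly to discard vanishing terms, is where the combinatorics is most delicate. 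Once that bookkeeping is completed, the combination above is forced and the theorem follows.
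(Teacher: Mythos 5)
The paper gives no proof of this theorem: it is quoted from Cayley's 1859 memoir, so there is no internal argument to compare with, and your proposal must stand on its own. Its framework is the natural one, and the part you actually carry out is correct: with $A(t)=DF_P(\gamma(t))$, $B(t)=DH_P(\gamma(t))$, $K(t)=D^2F_P(\gamma(t))$, Euler's relations give $A(0)=K(0)=0$ and $B(0)=3(d-2)H(P)$, the $t$-terms vanish since $\gamma'(0)\cdot\nabla F(P)=0$, and the $t^2$-terms cancel because $K''(0)=2(d-2)A''(0)$ while $\tfrac{2}{3H(P)}B(0)=2(d-2)$ (here the hypothesis that $P$ is neither singular nor an inflection point is what guarantees $H(P)\neq 0$). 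This establishes contact of order at least $3$, which is the easy part.

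The gap is that the two orders that carry the content of the theorem are asserted rather than proved. At order $t^3$, after using the third derivative of $F\circ\gamma\equiv 0$, the required vanishing is equivalent to the identity $H(P)\cdot\sum_{i,j,k}\partial^3_{ijk}F(P)\,v_iv_jv_k=DH_P(v)\cdot D^2F_P(v)$ for $P$ on $C$ and $DF_P(v)=0$; this identity is true, but ``differentiate $H=\det(\partial^2F)$ and apply Euler to the cofactors'' is a hint, not a derivation — one needs (a differentiated form of) the bordered-determinant identity $H(P)\det[P\,v\,w]^2=-(d-1)^2\,DF_P(w)^2\,D^2F_P(v)$, and you never write this down. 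More seriously, at order $t^4$ you only announce that the coefficient ``reduces to a nonzero scalar multiple of $-3(M\circ V_H)H+4\Psi-9H^3\Lambda$'' and concede that the bookkeeping is the main obstacle. That identification \emph{is} the theorem: the raw expansion of $O_P(\gamma(t))$ contains no minors $M_{ij}$, no second derivatives of $H$, and no $\Psi$ at all — they can only appear after further nontrivial identities (e.g.\ differentiating the order-$3$ identity along the curve to create the entries of $V_H$, and converting the $D^3F$- and $D^4F$-contractions coming from the fourth derivative of $F\circ\gamma\equiv0$ into $H$-data). Moreover the $\Lambda$-contribution enters as $\tfrac14\Lambda\,D^2F_P(v)^2$, so to see the combination $9H^3\Lambda$ with the \emph{same} (parametrization-dependent) scalar factor as the $\Psi$- and $M\circ V_H$-terms you must also invoke the cubic relation between $D^2F_P(v)$ and $H(P)$ furnished by the bordered identity, and then verify that the single common scalar is nonzero. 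None of this is carried out, so as written the proposal is a plausible plan, not a proof; to complete it you must either do this tensor bookkeeping explicitly or, as is classical (and as the paper's second approach does for cubics via the normal form $z^2x+zy^2+G_2(x,y)$), pass to adapted coordinates at $P$ where the order-$4$ coefficient can be computed and matched against $M\circ V_H$, $\Psi$ and $H$ directly.
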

The conic $O_P$, defined in the previous theorem, is called the osculating conic of $C$ at the point $P$. It occurred that the number of points $P$, for which we can obtain osculating conic, is determined by number $d(12d-27)$ (compare \cite[Section 27]{Cay65}). This justifies to give a name for such points.

\begin{definition}[Sextactic point] 
Let $P$ be a smooth point on a curve $C$ that is not an inflection
point. Then $P$ is called a sextactic point if intersection index
$$(C.O_P)_P \geq 6,$$
where $O_P$ is the osculating conic of $C$ at $P$. A sextactic point $P$ is said to be of
type $s$, or $s$-sextactic, when $$ s=(O_P.C)_P - 5.$$
\end{definition}

While in \cite{Cay59} Cayley was interested in defining the equation of osculating conic for a given point $P$, a few years later he focused on finding all points for which $O_P$ exists. As a result, he obtained a polynomial, the so-called second Hessian, which passes through all such points \cite[Section 27]{Cay65}. What turned out, his formula for second Hessian has mistake and was recently corrected by Maugesten and Moe in 2018 (see \cite[Theorem 1.1]{MaMo19}).

\begin{definition}[The second Hessian]
Let $C: F=0$ be a plane curve of degree $d$. We define its second Hessian $H_2(F)$ as follows
    \begin{align*}
        H_2(F):=&(12d^2 -54d+57)H \Jac(F,H,\Omega_1(M,V_H))\\
    &+(d-2)(12d-27)H
    \Jac(F,H,\Omega_2(M,V_H)) \\
    &-20(d-2)^2\Jac(F,H,\Psi).
    \end{align*}
\end{definition}

\section{Sextactic points and osculating conics for Hesse pencil curves}

In this section, we present our main result by considering the so-called \emph{Hesse pencil}, defined by
$$F_{\lambda,\mu}(x,y,z)=\lambda(x^3+y^3+z^3)+6\mu xyz,$$
where $(\lambda,\mu) \in \mathbb{P}^1$. For a comprehensive discussion on the Hesse pencil, the reader is encouraged to consult \cite{dolga}.

In this paper, we provide all necessary calculations required to determine the coordinates of the sextactic points and the equation of the osculating conics. If $\lambda = 0$ and $\mu \neq 0$, then $F_{0, \mu} = xyz$, which is not relevant for our analysis. The other boundary case, $F_{\lambda, 0} = x^3 + y^3 + z^3$, has been addressed in \cite{SzeSzp}. Therefore, we assume $\lambda \neq 0$ and $\mu \neq 0$, and we rewrite the equation of $F_{\lambda, \mu}$ using $t := \frac{6\mu}{\lambda} \neq 0$. Hence, from this point onward, we consider
\begin{align*}
    F_t(x,y,z)=x^3+y^3+z^3+txyz,
\end{align*}
and omit the subscript in the notation of $F_t$.

Assume that $P=(a,b,c) \in \PP^2$ is a point on the curve $C : F=0$. We start with finding the equation of the second Hessian. In order to do it, we calculate
\begin{align}
    \left[\partial_x F,\; \partial_y F,\; \partial_z F\right]&=\left[3x^2+tyz,\; 3y^2+txz,\; 3z^2+tyx \right], \nonumber \\
    \left[\partial^2_{x^2} F,\; \partial^2_{y^2} F,\; \partial^2_{z^2} F,\; \partial^2_{xy} F,\; \partial^2_{xz} F,\; \partial^2_{yz} F \right]&=\left[6x,\; 6y,\; 6z,\; tz,\; ty,\; tx\right], \label{eq:SecondDer}
\end{align} which we use to find the following equations
\begin{align*}
DF_P(x,y,z)&:=x(3a^2+tbc)+y(3b^2+tac)+z(3c^2+tab),  \\
D^2F_P(x,y,z)&:=6(ax^2+by^2+cz^2)+2t(cxy+bxz+ayz), 
\end{align*}
By \eqref{eq:SecondDer} and the definition of Hessian
\begin{align}
\label{eq:Mhessian}
   H(x,y,z):=\det\begin{pmatrix}
        6x & tz & ty\\
        tz & 6y & tx\\
        ty & tx & 6z 
    \end{pmatrix}=(216+2t^3) xyz -6t^2(x^3+y^3+z^3),
\end{align} and, since during computations we are interested in points lying on $F$, we sometimes use the following form of $H$, which follows from substituting the equation of $F$ into $H = 0$, and the value of $H(P)$, namely
 \begin{align}
 \label{eq:hessian}
H(x,y,z)=8xyz(27+t^3),\quad H(P)=8abc(27+t^3).
 \end{align}
We need to determine all singular points, which by definition, cannot be sextactic points. Solving the system of equations $F = H = 0$, we find that the condition for the point $P$ to not be singular is equivalent to $t^3 + 27 \neq 0$, which we assume from now on.  
From \eqref{eq:Mhessian} we have the following vectors
\begin{align} 
V_H =& \left[-36t^2x,\, -36t^2y,\, -36t^2z,\,  (432+4t^3)x,\, (432+4t^3)y,\, (432+4t^3)z\right] \label{VH} \\
M =& \left[36yz-t^2x^2,\, 36xz-t^2y^2,\, 36xy-t^2z^2,\, -6tx^2+t^2yz,\, -6ty^2+t^2xz,\, -6tz^2+t^2xy\right] \nonumber
\end{align}
and from \eqref{eq:DFD2F} we derive
\begin{footnotesize}
    \begin{equation*}
    DH_P(x,y,z):=x\left((2t^3+216)bc-18t^2a^2\right)+y\left((2t^3+216)ac-18t^2b^2\right)+z\left((2t^3+216)ab-18t^2c^2\right).
    \end{equation*}
\end{footnotesize}
In the following computations we can use the vector $V_H$ obtained from the equation of $H$ in \eqref{eq:hessian}, as we are interested in the points of the intersection of second Hessian and $F=0$. Thus
$$
V_H=\left[0,0,0,16x(27+t^3),16y(27+t^3),16z(27+t^3)\right],
$$ from which we obtain
\begin{align*}
    \partial x (\Omega_1(M,V_H))=32t(27+t^3)(-6x^2+tyz), & \quad & \partial x(\Omega_2(M,V_H))=16t(27+t^3)(-6x^2+tyz),\\
    \partial y(\Omega_1(M,V_H))= 32t(27+t^3)(-6y^2+txz), & \quad & \partial y(\Omega_2(M,V_H))=16t(27+t^3)(-6y^2+txz), \\
    \partial z(\Omega_1(M,V_H))=32t(27+t^3)(-6z^2+txy), & \quad & \partial z(\Omega_2(M,V_H))=16t(27+t^3)(-6z^2+txy).
\end{align*}
Now, observe that for any partial derivative $\partial_{\bullet}$, we have two relations
\begin{align*}
    64t(27+t^3)\cdot\partial_{\bullet}F -12t^2\cdot\partial_{\bullet}H+\partial_{\bullet}(\Omega_1(M,V_H))=0,\\
    32t(27+t^3)\cdot\partial_{\bullet}F -6t^2\cdot\partial_{\bullet}H+\partial_{\bullet}(\Omega_2(M,V_H))=0,
\end{align*}
which give
$\Jac(F,H,\Omega_1(M,V_H))=\Jac(F,H,\Omega_2(M,V_H))=0$. Finally, by \eqref{eq:psi}, the polynomial $\Psi(x,y,z)$ is of the form 
\begin{align*}
\Psi(x,y,z)=&27t^6(x^6+y^6+z^6)-6(23t^6+1728t^3+23328)(x^3y^3+x^3z^3+y^3z^3)+\\
&2t(5t^6+270t^3+23328)xyz(x^3+y^3+z^3)
-t^2(t^6-1404t^3-58320)x^2y^2z^2\\
\stackrel{F=0}{=}& 
192(27+t^3)^2(12(x^3y^3+x^3z^3+y^3z^3)-t^2x^2y^2z^2),
\end{align*}
where symbol $\stackrel{F=0}{=}$ stands for reduction by the equation of $F$. Thus, to derive the equation of $H_2(F)$, we consider only $\Jac(F, H, \Psi)$, which is the determinant of the matrix

\begin{footnotesize}
\begin{equation*}
    \begin{pmatrix}
      3x^2+tyz & 3y^2+txz & 3z^2+tyx \\
        yz & xz & xy \\
        36(x^2y^3+x^2z^3)+10t^2xy^2z^2 & 36(x^3y^2+y^2z^3)+10t^2x^2yz^2 & 36(z^2x^3+z^2y^3)+10t^2x^2y^2z 
        \end{pmatrix},
\end{equation*}
\end{footnotesize}

\noindent
multiplied by the constant factor of $1536(27+t^3)^3$. Therefore, the equation of $H_2(F)$, ignoring constant factors, is 
$$H_2(F)(x,y,z) =(x^3-y^3)(x^3-z^3)(y^3-z^3).$$
This result is not new. It was obtained by Cayley in \cite[Sections 30-39]{Cay59}.

The next stage of this article is to present the exact equation of an osculating conic in a given point $P=(a,b,c) \in C$. This equation was also presented by Calyley, see \cite[Section 34]{Cay65}. Since the formula for the Hessian provided by Cayley is incorrect, specifically, the equation 
$-(1+8l^3)xyz$ given by Cayley (see \cite[p. 387]{Cay59}) should actually be 
$216(1+8l^3)xyz$, the resulting equation for the osculating conic is also incorrect. Our goal here is not only to present the correct formula for such conics but also to simplify the equation by applying certain observations that we introduce later in the article. In this context, we propose two distinct approaches. The first approach refers to Theorem \ref{Osculating} and we provide correct formula based on this theorem. The second approach requires a specific form of point coordinates that we have, along with certain projective changes of coordinates. Although both methods yield the same result, we present also the second approach because it leads to a final equation that is simpler compared to the first approach.

We now proceed with the first approach. To derive the equation of the osculating conic from Theorem \ref{Osculating}, it is necessary to introduce first the formula for $\Lambda$ at a given point $P = (a, b, c) \in C$. It can be observed that by taking $M$ and $V_H$ from \eqref{VH}, we have
$$
M \circ V_H = 12t(t^3-216)(x^3+y^3+z^3+txyz),
$$ which equals to zero for any $P$.
Since 
$$
\frac{4\Psi}{9H^3}=\frac{12(x^3y^3+x^3z^3+y^3z^3)-t^2x^2y^2z^2}{6(t^3+27)x^3y^3z^3},
$$
then
\begin{equation*}
    \Lambda(P)=\frac{4\Psi(P)}{9H^3(P)}=\frac{1}{t^3+27}\left(\frac{2}{c^3}+\frac{2}{b^3}+\frac{2}{a^3}-\frac{t^2}{6abc}\right).
\end{equation*}
After applying the coordinates of point $P$ into $O_P$, we get the equation
\begin{footnotesize}
\begin{align}
\label{eq:OPTheorem}
O_P(x,y,z):=&6(ax^2+by^2+cz^2)+2t(cxy+bxz+ayz)+ \nonumber\\
    &-\frac{1}{t^3+27}\left(\left(\frac{t^3+108}{6a}-\frac{3t^2a}{2bc}
    \right)x+\left(\frac{t^3+108}{6b}-\frac{3t^2b}{2ac}\right)y+\left(\frac{t^3+108}{6c}-\frac{3t^2c}{2ab}\right)z\right)\cdot \nonumber\\
&\cdot\left(x(3a^2+tbc)+y(3b^2+tac)+z(3c^2+tab)\right)+ \\
    &-\frac{1}{t^3+27}\left(\frac{2}{c^3}+\frac{2}{b^3}+\frac{2}{a^3}-\frac{t^2}{6abc}\right)\left(x(3a^2+tbc)+y(3b^2+tac)+z(3c^2+tab)\right)^2. \nonumber
    \end{align}
\end{footnotesize}

The second approach to finding osculating conics is based on the observation that sextactic points of Hesse Pencil curves form orbits with respect to a group 
$G$, whose generators we describe later. To find an equation for 
$O_P$ at a given sextactic point $P$, we first need to determine its coordinates. In order to do it, consider the following system of constraints
\begin{equation*}
\left\{
    \begin{array}{r l}
        H_2(F)&=0 \\
        F&=0  \\
        H &\neq 0
    \end{array}
    \right.
\Longleftrightarrow
    \left\{
    \begin{array}{r l}
        (x^3-y^3)(x^3-z^3)(y^3-z^3) & = 0 \\
        x^3+y^3+z^3+txyz & = 0 \\
        8(t^3+27)xyz & \neq 0
    \end{array}
    \right. .
\end{equation*}
This system is equivalent to nine systems. Due to the symmetry, it is enough to solve the following systems
\begin{equation*}
    \left\{
    \begin{array}{r l}
        y- \epsilon^{2k}x& =0 \\
        x^3+y^3+z^3+txyz & = 0 \\
        xyz & \neq 0
    \end{array}
    \right. ,
\end{equation*}
where $k\in\{0,1,2\}$ and $\epsilon =\frac{1+i\sqrt{3}}{2}$. 
Substituting the first equation to the second, we obtain
\begin{equation}
\label{eq:zxEquation}
    2x^3+\epsilon^{2k}tx^2z+z^3 =0.
\end{equation}
Since $x$ cannot be equal to $0$, without loss of generality, we may substitute $x=1$ in \eqref{eq:zxEquation} and thus consider the equation
\begin{equation}
\label{eq:relation}
    z^3+\epsilon^{2k}tz+2=0.
\end{equation}
Three solutions of these equations are
$$ z_1(\epsilon^{2k}t)=a(t)-\frac{\epsilon^{2k}t}{3a(t)}, \quad z_2(\epsilon^{2k}t)=\frac{\epsilon^{2k+1} t}{3 a(t)}-\epsilon^{5}a(t), \quad z_3(\epsilon^{2k}t)=\frac{\epsilon^{2k+5} t}{3 a(t)}-\epsilon a(t),$$ 
where
$$a(\epsilon^{2k}t)=\frac{\sqrt[3]{3}\cdot\sqrt[3]{\sqrt{3}\sqrt{(\epsilon^{2k}t)^3+27}-9}}{3}=\frac{\sqrt[3]{3}\cdot\sqrt[3]{\sqrt{3}\sqrt{t^3+27}-9}}{3} =a(t).$$

Thus, we obtain 9 points, which coordinates are:

\begin{table}[H]
\centering
\begin{tabular}{lll}
$P_1=(1,1,z_1(t))$, & $P_2=(1,\epsilon^2,z_1(\epsilon^2t))$,   & $P_3=(1,-\epsilon,z_1(-\epsilon t))$,  \\ 
$P_4=(1,1,z_2(t))$, &$P_5=(1,\epsilon^2,z_2(\epsilon^2t))$,  &$P_6=(1,-\epsilon,z_2(-\epsilon t))$,  \\
 $P_7=(1,1,z_3(t))$, & $P_8=(1,\epsilon^2,z_3(\epsilon^2t))$,  & $P_9=(1,-\epsilon,z_3(-\epsilon t))$. \\
\end{tabular}
\end{table}
The remaining 18 points are obtained by permuting the previously determined coordinates points. However, it is important to note that these do not represent all possible permutations of the points, as the omitted permutations correspond to points that coincide with those already listed.
For example, if in $P_2$ we switch the first two coordinates, we get
$(\epsilon^2,1,z_1(\epsilon^2t))$. After dividing all coordinates by $\epsilon^2$, we obtain $$\left(1, \frac{1}{\epsilon^2}, \frac{z_1(\epsilon^2t)}{\epsilon^2}\right)=\left(1,-\epsilon, z_3(-\epsilon t)\right)=P_9.$$
By consolidating all such cases, we arrive at the remaining set of points presented in the list
\begin{table}[H] 
\centering
\begin{tabular}{lll}
$P_{10}=(1,z_1(t),1)$, & $P_{11}=(1,z_1(\epsilon^2 t),\epsilon^2)$,  & $P_{12}=(1,z_1(-\epsilon t),-\epsilon)$, \\
 $P_{13}=(1,z_2(t),1)$,& $P_{14}=(1,z_2(\epsilon^2 t),\epsilon^2)$, & $P_{15}=(1,z_2(-\epsilon t),-\epsilon)$, \\
$P_{16}=(1,z_3(t),1)$, &  $P_{17}=(1,z_3(\epsilon^2 t),\epsilon^2)$, & $P_{18}=(1,z_3(-\epsilon t),-\epsilon)$,  \\
\\ 

 $P_{19}=(z_1(t),1,1)$, & $P_{20}=(z_1(\epsilon^2 t),\epsilon^2,1)$, & $P_{21}=(z_1(-\epsilon t),-\epsilon,1)$, \\
$P_{22}=(z_2(t),1,1)$, & $P_{23}=(z_2(\epsilon^2 t),\epsilon^2,1)$, & $P_{24}=(z_2(-\epsilon t),-\epsilon,1)$, \\
$P_{25}=(z_3(t),1,1)$, & $P_{26}=(z_3(\epsilon^2 t),\epsilon^2,1)$, & $P_{27}=(z_3(-\epsilon t),-\epsilon,1)$. \\ 
\end{tabular}
\end{table}
 These points form orbits with respect to the group $G$ generated by elements
 \begin{align*}
g_0(x,y,z)=(x,z,y), \quad 
 g_1(x,y,z)=(y,z,x), \quad
 g_2(x,y,z)=(x,\epsilon^2 y, -\epsilon z),
 \end{align*}
(compare \cite[Section 4]{dolga}).
Thus, it is enough to find an explicit equations of osculating conic for three of the sextactic points, namely $P_1,P_4, P_7$; and one can use $g_j(x,y,z)$ in order to get all of them. If we apply the formula from \eqref{eq:OPTheorem} to these points, after clearing denominators, we get

\begin{footnotesize}
    \begin{align}
    \label{eq:OPforP1Theorem}
    O_{P}(x,y,z)=&(-3+2z_i t)(-3+12z_i^3-4z_it+z_i^4t-2z_i^2 t^2)\cdot(x^2+y^2) \nonumber\\
    &-(-4z_i^5t^2+z_i^4t^4-15z_i^4t+2z_i^3t^3-90z_i^3+4z_i^2t^2-12z_it-18)\cdot xy\\
    &-(-12z_i^6t+z_i^5t^3-63z_i^5+2z_i^4t^2+z_i^3t^4-9z_i^3t+z_i^2t^3-45z_i^2-2z_it^2-6t)\cdot(xz+yz) \nonumber\\
    &-(-18z_i^7+3z_i^6t^2-12z_i^5t+4z_i^4t^3+45z_i^4-2z_i^3t^2-15z_i^2t-t^2)\cdot z^2, \nonumber
    \end{align}
\end{footnotesize}

\noindent
where $z_i$ in the formula stands for $z_i(t)$, with $i=1,2,3$. If we use relation \eqref{eq:relation} for $z_i(t)$, we can perform further reduction of coefficients in \eqref{eq:OPforP1Theorem}, namely

\begin{footnotesize}
    \begin{align}
    \label{eq:OPforP1Theorem}
    O_{P}(x,y,z)=&3(2z_i^3+7)(z_i^3-1)\cdot(x^2+y^2)+(z_i^6+4z_i^3+22)(z_i^3-1)\cdot xy \nonumber\\
    &+(-6t+15z_i^8-18z_i^5-15z_i^2)\cdot(xz+yz)+(-t^2-z_i^{10}-20z_i^7+40z_i^4-10z_i)\cdot z^2.
    \end{align}
\end{footnotesize}

What is not clearly visible from this representation of the conic equation is that it can be further reformulated with lower-degree coefficients, resulting in a more compact form without losing expressiveness. To achieve this, we refer to \cite[Lemma 2.24]{stud}. Applying this lemma requires performing certain projective changes of coordinates on the equation of $F$, and deriving the equation of the conic $O_P$ from the transformed form of $F$.

We first describe the general outline of the procedure, and subsequently provide the explicit forms of the matrices $A,B,C$ and $D$ used in the transformations. In what follows, we write $z_i$ instead of $z_i(t)$. First, we move the point $(1,1,z_i)$ to $(0,0,1)$ using a projective change of coordinates defined by matrix $A$, resulting in a new equation of $F$, which we denote by $F_1$. Next, we rewrite $F_1$ in the form $F_1(x,y,z) = z^2x + G_1(x,y,z)$ by applying a second projective transformation defined by matrix $B$. Finally, we apply two more projective changes of coordinates, defined by matrices $C$ and $D$, to obtain the desired form $F_1(x,y,z)=z^2x+zy^2+G_2(x,y)$, which, after dehomogenization, serves as the starting point for Lemma 2.24 in \cite{stud}. Before proceeding with the application of this lemma, we provide the explicit forms of all matrices used in the transformations

\begin{align*}
    A=&\begin{pmatrix}
         \frac{1}{z_i} & 0 & 1 \\
         0 & \frac{1}{z_i} & 1 \\
         0 & 0 & z_i \\
     \end{pmatrix}, 
     &B=&\begin{pmatrix}
         \frac{z_i}{3+z_it} & -1 & 0 \\
         0 & 1 & 0 \\
         0 & 0 & 1 \\
     \end{pmatrix},  \\
     C=&\begin{pmatrix}
         1 & 0 & 0 \\
         0 & 1 & 0 \\
         -\frac{3}{2(3+z_it)^2} & -\frac{tz_i-6}{2z_i(3+z_it)} & 1 \\
     \end{pmatrix}, 
     &D=&\begin{pmatrix}
         1 & 0 & 0 \\
         0 & \frac{z_i}{\sqrt{6-tz_i}} & 0 \\
         0 & 0 & 1 \\
     \end{pmatrix}.
 \end{align*}
After dehomogenization, $F_1$ takes the form  
$$
F_1(x,y,1) = x + y^2 + fx^3 + gx^2y + hxy^2 + iy^3,
$$
for which the osculating conic $\widetilde{O_P}$ is given by the equation  
\begin{equation}
\label{eq:correctOP}
\widetilde{O_P}(x,y) = -(i^2 + h)x^2 - ixy + y^2 + x.
\end{equation}
In \eqref{eq:correctOP}, we present the corrected expression for the osculating conic. The version given in \cite[Lemma 2.24]{stud} contains signs error in the coefficient of $x^2$. After substitution for $i$ and $h$, we obtain
\begin{equation*}
\widetilde{O_P}(x,y)= \frac{-9tz_i}{2(3+z_it)^2(6-tz_i)}x^2-\frac{\sqrt{6-tz_i}}{2(3+z_it)}xy+y^2+xz.
\end{equation*}
The equation of $\widetilde{O_P}(x,y)$ after homogenization, and projective change of coordinates, given by
\begin{align}
    \begin{pmatrix}
        x \\
        y \\
        z \\
    \end{pmatrix}=A^{-1}B^{-1}C^{-1}D^{-1}
    \begin{pmatrix}
        x \\
        y \\
        z \\
    \end{pmatrix}=
    \renewcommand{\arraystretch}{1.5}
    \begin{pmatrix}
            3+z_it & 3+z_it & \frac{-2(z_it+3)}{z_i} \\
            0 & \sqrt{6-tz_i} & \frac{-\sqrt{6-tz_i}}{z_i} \\
            \frac{3}{2(z_it+3)} & \frac{tz_i-3}{2(z_it+3)} & \frac{6+z_it}{2z_i(3+z_it)}
    \end{pmatrix}
    \begin{pmatrix}
        x \\
        y \\
        z \\
    \end{pmatrix}
    \renewcommand{\arraystretch}{1.0}
\end{align}
is desired equation of osculating conic in the point $(1,1,z_i)$, i.e.
\begin{footnotesize}
    \begin{equation}
    \label{eq:finalOP}
        O_{P}(x,y,z)=z_i^2(9-6tz_i)\cdot(x^2+y^2)+z_i(15tz_i+18)\cdot(xz+yz)-z_i^2(t^2z_i^2+18)\cdot xy+(z_i^2t^2-18tz_i-36)\cdot z^2.
    \end{equation}
\end{footnotesize}
For the reader’s convenience, we implement a program written in {\tt Singular} \cite{DGPS}, which generates all 27 equations of osculating conics \cite{github}, enabling interested readers to use the given equations in their own computations.

We end this section by presenting an easy example, which shows how to obtain one of the demanded conic.
\begin{example}
    If we want to find the equation of osculating conic in the point $P_{27}$, first observe that $P_{27}=g_2\left(g_1(g_0(P_4))\right)$. Therefore, we need to substitute
    $$x \to z, \;\; y \to \epsilon^2 y, \;\; z \to -\epsilon x,$$
    in the equation \eqref{eq:finalOP}, which leads to
    \begin{footnotesize}
        \begin{equation*}
        O_{P_{27}}(x,y,z)=z_2^2(9-6tz_2)\cdot(z^2-\epsilon y^2)+z_2(15tz_2+18)\cdot(-\epsilon xz+xy)-z_2^2(t^2z_2^2+18)\cdot \epsilon^2 yz+(z_2^2t^2-18tz_2-36)\cdot \epsilon^2 x^2. 
        \end{equation*}
    \end{footnotesize}
\end{example}

\section{Special cases and their role in free curve construction}

The aim of this section is to extend certain results from \cite{dimca}, where the authors investigate the freeness and near-freeness of plane curves defined as the product of the Fermat curve $x^3+y^3+z^3=0$ (i.e., the case $t=0$ in the Hesse pencil) and its osculating conics. We investigate whether analogous phenomena occur for other members of the pencil. Before stating the main result of this section, we recall some basic properties of curves from the Hesse pencil, as presented in \cite{dolga}.

There are two groups of special curves in the Hesse pencil that can be described as zeroes of certain binary forms. Namely, if we consider
$$F_{\lambda,\mu}(x,y,z)=\lambda(x^3+y^3+z^3)+6\mu xyz,$$
then the zeroes of $\mu(\lambda^3-\mu^3)$ define curves from the Hesse pencil that admit an automorphism of order 6 and are called \emph{equianharmonic cubics}. The zeroes of the form $\lambda^6-20\lambda^3\mu^3-8\mu^6$ define another family of curves known as \emph{harmonic cubics}. Since the Fermat curve is one of the equianharmonic cubics, this naturally raises the question of whether the remaining three equianharmonic cubics exhibit the same behavior with respect to freeness. As we shall see, the answer is affirmative.

Let $\mathcal{C}: f=0$ be a reduced curve of degree $d$ in the complex projective plane $\mathbb{P}^2$. The Jacobian ideal of $f$ is defined as $J_f=(\partial_x f, \partial_y f, \partial_z f)$, and the associated graded $S$-module of Jacobian syzygies is given by
$$AR(f)=\left\{(a,b,c)\in S^3: a\cdot\partial_x f+b\cdot\partial_y f+c\cdot\partial_z f=0 \right\}.$$

The curve $\mathcal{C}$  is said to be an $m$-syzygy curve if the $S$-module $AR(f)$ admits a minimal set of $m$ homogeneous generators $r_1,r_2,\dots,r_m$, where each $r_i$ has degree $d_i:=\deg r_i$, and the degrees are arranged in non-decreasing order
$$1\leq d_1 \leq d_2 \leq\ldots\leq d_m.$$ 
The multiset $(d_1,d_2,\dots ,d_m)$ is referred to as the set of exponents associated with the plane curve $\mathcal{C}$.

\begin{definition}
\label{def:freeCurveAlt}
A curve $\mathcal{C}$ that admits exactly two minimal syzygies is called \emph{free}. In this situation, the degrees of the generators satisfy the relation $d_1+d_2=d-1$.
\end{definition}
\begin{definition}
\label{def:nearlyFreeCurveAlt}
A $3$-syzygy curve $\mathcal{C}$ is referred to as \emph{nearly free} if and only if the two largest degrees among the generators coincide, i.e., $d_3=d_2$, and the sum of the two smallest degrees satisfies $d_1+d_2=d$.
\end{definition}

As mentioned earlier, the Fermat curve is one of the equianharmonic cubics in the Hesse pencil. Solving the equation $\mu(\lambda^3-\mu^3)=0$ yields the values of $\mu$ and $\lambda$, and consequently the corresponding values of the parameter $t=\frac{6\mu}{\lambda}$, for the remaining three members of this family. These values are $t\in \{6,6\epsilon^2,6\epsilon^4\}$. We are now ready to state the main theorem of this section.

\begin{thm}
\label{thm:main}
Let $F(x,y,z)=x^3+y^3+z^3+txyz$, and let $t\in\{0,6,6\epsilon^2,6\epsilon^4\}$. Let $C_i$, for $i \in \{1,2,\ldots,27\}$, denote the osculating conics to the curve $F=0$. Then:
\begin{itemize}
    \item[a)] All curves of the form $F \cdot C_i = 0$ are nearly free with exponents $(2,3,3)$.
    
    \item[b)] There exist exactly nine disjoint sets $G_\alpha \subset \{1,\dots,27\}$, each of size three, such that
    $$
    \bigcup_{\alpha} G_\alpha = \{1,\dots,27\},
    $$
    and:
    \begin{itemize}
        \item the curve $F \cdot C_i \cdot C_j \cdot C_k = 0$, for $\{i,j,k\} = G_\alpha$, is free with exponents $(3,5)$;
        \item for each distinct indices in the set $\{i,j\} \subset G_\alpha$, the curve $F \cdot C_i \cdot C_j = 0$ is free with exponents $(3,3)$;
        \item for distinct indices in the set $\{i,j\} \not\subset G_\alpha$, the curve $F \cdot C_i \cdot C_j = 0$ is nearly free with exponents $(3,4,4)$.
    \end{itemize}
\end{itemize}
\end{thm}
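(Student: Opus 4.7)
The plan is to exploit the symmetry of the Hesse pencil, combined with the explicit equations for the osculating conics derived in Section 3, to reduce the verification of the syzygy structure to a finite number of representative computations that can be carried out in \texttt{Singular} (cf.\ the code referenced in \cite{github}). The key observation is that the group $G = \langle g_0, g_1, g_2 \rangle$ acts on $\PP^2$ preserving the curve $F = 0$ for every value of $t$ in the equianharmonic locus $\{0, 6, 6\epsilon^2, 6\epsilon^4\}$, and its action permutes the twenty-seven osculating conics. Consequently the isomorphism class of the graded $S$-module $AR(F \cdot C_i)$, and more generally $AR(F \cdot \prod_\ell C_{i_\ell})$, depends only on the $G$-orbit of the (multi)set of indices involved, which cuts the amount of bookkeeping drastically.

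For part (a), I would fix one representative of each $G$-orbit on $\{1,\ldots,27\}$ and substitute the explicit expression from \eqref{eq:finalOP} into $f := F \cdot C_i$, specializing $t$ to each of the four values. Computing the Jacobian ideal $J_f$ and its first syzygy module, the goal is to exhibit one degree-$2$ and two independent degree-$3$ relations that together generate $AR(f)$ minimally, matching the definition of a nearly free curve with exponents $(2,3,3)$, since $2 + 3 = 5 = \deg f$ and $d_2 = d_3 = 3$, as required by Definition~\ref{def:nearlyFreeCurveAlt}. Heuristically, the degree-$2$ syzygy reflects the high-order contact between $F$ and $C_i$ forced at the sextactic point $P_i$: a multiplicity-$6$ intersection severely constrains the partial derivatives, and one should be able to read off the corresponding Koszul-type relation directly from the tangency data encoded by Theorem~\ref{Osculating}.

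For part (b), the first task is to describe the partition $\{G_\alpha\}$ explicitly. I would expect the nine sets to correspond to a natural geometric configuration — for instance, the triples $\{P_i, P_j, P_k\}$ arising from a fixed branch $z_\ell(\cdot)$ of the root function in \eqref{eq:relation}, or equivalently the orbits of a suitable index-nine subgroup of $G$. Once the partition is pinned down, only finitely many representative triples and pairs need to be checked, and the $G$-action transports the resulting syzygies across orbits. In each case I would construct minimal generators of $AR(f)$ of the predicted degrees and verify the numerical criteria $d_1 + d_2 = d-1$ for freeness (Definition~\ref{def:freeCurveAlt}) or $d_1 + d_2 = d$ with $d_2 = d_3$ for near-freeness; the degree counts $\deg(F \cdot C_i \cdot C_j \cdot C_k) = 9$ with exponents $(3,5)$, and $\deg(F \cdot C_i \cdot C_j) = 7$ with exponents $(3,3)$ or $(3,4,4)$, all match the respective definitions.

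The principal obstacle is identifying the partition $\{G_\alpha\}$ intrinsically rather than as the output of a brute-force search: it is straightforward to verify in \texttt{Singular} that \emph{some} partition of $\{1,\dots,27\}$ into nine triples has the required property, but a conceptual explanation — ideally phrased in terms of the incidence of tangency points, or of a distinguished index-nine subgroup of $G$ — is substantially harder. A secondary difficulty is that although the four equianharmonic cubics are mutually projectively equivalent, the explicit syzygies carry $t$-dependent coefficients, and some care is needed to present them uniformly. The fact that Theorem~\ref{thm:main} is asserted precisely on the equianharmonic locus, and not for general $t$, strongly suggests that the degree-$2$ syzygy in part (a) exists only when $F$ admits the extra order-$6$ automorphism, which is the geometric fact that ultimately has to be cashed in to obtain a conceptual, as opposed to purely computational, proof.
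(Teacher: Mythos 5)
Your plan is a reasonable computational scheme, but it misses the route the paper actually takes, and as written it has a genuine gap. The paper's proof is essentially one line: the case $t=0$ is precisely the main result of \cite{dimca}, and the remaining values $t\in\{6,6\epsilon^2,6\epsilon^4\}$ are the other equianharmonic members of the Hesse pencil, which are projectively equivalent to the Fermat cubic; since a projective change of coordinates carries osculating conics to osculating conics (intersection multiplicities are invariant) and preserves the module $AR(f)$ up to graded isomorphism, hence freeness, near-freeness and exponents, the whole statement transfers. You mention the mutual projective equivalence of the four equianharmonic cubics only as a bookkeeping nuisance (``$t$-dependent coefficients''), when in fact it is the engine of the proof: combined with the known $t=0$ case it makes all of your representative Singular computations unnecessary. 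Your proposal instead re-proves the $t=0$ case from scratch, and it does so only conditionally: no syzygy of degree $2$ or $3$ is actually exhibited, the partition $\{G_\alpha\}$ is never specified, and you yourself flag its intrinsic description as the ``principal obstacle.'' For the theorem as stated no intrinsic description is needed — an explicit list of nine triples, read off from the finite computation (or imported from \cite{dimca} and transported by the $G$-action and by the projective equivalences), suffices — so what you treat as the hard conceptual point is optional, while the content that a proof actually requires (the minimal generators of $AR$, or a reduction to a result where they are computed) is absent.

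One concrete error in your closing heuristic: you argue that the degree-$2$ syzygy in part (a) should exist only when $F$ admits the extra order-$6$ automorphism, i.e.\ only on the equianharmonic locus. This is contradicted by the paper's companion theorem for $t\in\{-5,-3(1-\sqrt3),-3(1+\sqrt3)\}$, where $F\cdot C_i$ is again nearly free with exponents $(2,3,3)$; so part (a) is not what distinguishes the equianharmonic cubics. What is special to the equianharmonic locus is part (b): the existence of free pairs with exponents $(3,3)$, free triples with exponents $(3,5)$, and the associated partition into nine triples, none of which survive for generic or harmonic $t$ (there all pairs are nearly free $(3,4,4)$ and all triples have exponents $(5,5,5)$). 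Any conceptual explanation you attach to the order-$6$ automorphism has to target part (b), not the degree-$2$ relation in part (a).
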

\begin{proof}
    The case $t=0$ was established in \cite{dimca}. The remaining cases follow by analogous arguments and are thus omitted.
\end{proof}

A natural question that arises at this point is whether harmonic cubics exhibit similar properties. It turns out that the answer is negative. Solving the equation $$\lambda^6-20\lambda^3\mu^3-8\mu^6=0$$ yields, among other values, $\frac{6\mu}{\lambda} \in \{-3(1-\sqrt{3}),-3(1+\sqrt{3})\}$. As verified using the {\tt Singular} software, for these values of $t$, the resulting curves exhibit the same behavior as in the case $t=-5$, which corresponds to a cubic curve that does not belong to either of the two previously discussed families. 

\begin{thm}
Let $F(x,y,z)=x^3+y^3+z^3+txyz$, and let $t\in\{-5,-3(1-\sqrt{3}),-3(1+\sqrt{3})\}$. Let $C_i$, for $i \in \{1,2,\ldots,27\}$, denote the osculating conics to the curve $F=0$. Then:
\begin{itemize}
    \item[a)] Each curve defined by $F \cdot C_i = 0$ is nearly free with exponents $(2,3,3)$.
    
    \item[b)] Each curve of the form $F \cdot C_i \cdot C_j = 0$, for distinct indices $i,j \in \{1,\ldots,27\}$, is nearly free with exponents $(3,4,4)$.
    
    \item[c)] Each curve of the form $F \cdot C_i \cdot C_j \cdot C_k = 0$, for distinct indices $i,j,k \in \{1,\ldots,27\}$, has exponents $(5,5,5)$.
\end{itemize}
\end{thm}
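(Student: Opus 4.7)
The plan is to verify parts (a), (b), (c) by computing, for a small set of orbit representatives, the minimal graded free resolution of the Jacobian syzygy module $AR(f)$ of the relevant product $f$. The key reduction is the action of the group $G = \langle g_0, g_1, g_2\rangle$ from Section~3: a direct check using $\epsilon^3 = -1$ shows that each generator preserves $F$ for every value of $t$ in the Hesse pencil, so $G$ permutes the $27$ osculating conics $C_i$ via linear changes of coordinates. Since the exponents of $AR(f)$, and a fortiori freeness and near-freeness, are invariant under such changes, it suffices to fix one representative of each $G$-orbit on singletons, unordered pairs, and unordered triples of indices in $\{1,\ldots,27\}$ and to check the conclusion there. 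For part~(a), the orbit of $P_1$ already reaches all $27$ sextactic points (by iterating $g_0, g_1, g_2$ starting from $P_1$), so that part reduces to a single \texttt{Singular} computation per value of $t$.

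For each orbit representative and each of the three listed values of $t$, the plan is to substitute coefficients into the explicit equation \eqref{eq:finalOP}, form the polynomial $f$, compute its partial derivatives, and read off the minimal free resolution of $AR(f)$ using \texttt{Singular}, as was already done in the $t = 0$ case in Theorem~\ref{thm:main} and in \cite{dimca}. The target structures are pinned down by the statement itself: for (a), a nearly free curve of degree $d = 5$ with exponents $(2,3,3)$ (note $d_1 + d_2 = d$); for (b), a nearly free curve of degree $d = 7$ with exponents $(3,4,4)$; for (c), three minimal $AR(f)$-generators all in degree $5$ on a curve of degree $9$, which one then observes is neither free (a free structure would demand two generators with $d_1 + d_2 = 8$) nor nearly free (that would force exponents $(4,5,5)$), leaving $(5,5,5)$ as the only possibility consistent with three degree-$5$ generators. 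The qualitative difference with Theorem~\ref{thm:main} is that here no special orbit of triples produces a free curve, and this should emerge uniformly from the computation without any analogue of the partition $\{G_\alpha\}$.

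The main obstacle is organizational and arithmetic rather than conceptual. One must (i) enumerate the $G$-orbits on unordered pairs and triples in order to know exactly how many independent runs are required; (ii) carry out exact arithmetic over $\mathbb{Q}(\epsilon)$ for $t = -5$ and over $\mathbb{Q}(\epsilon, \sqrt{3})$ for $t = -3(1 \pm \sqrt{3})$, which lengthens the Gr\"obner-basis reductions but does not alter their structure; and (iii) confirm that each product $F \cdot C_i \cdot C_j \cdot \cdots$ is reduced, which follows from the irreducibility of the smooth cubic $F = 0$ (so no conic shares a component with it by degree) and from the fact that distinct osculating conics meet the cubic with contact six at different sextactic points, so cannot share a component with one another. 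With these preliminaries in hand, the theorem reduces to a finite, exact syzygy computation.
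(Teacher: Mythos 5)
Your overall strategy coincides with the paper's: the statement is established by a finite exact computation of the modules $AR(f)$ in {\tt Singular} (the paper offers nothing beyond this, pointing to the scripts at \cite{github}), and using the group $G$ to cut down the number of runs is consistent with how the paper treats the conics themselves. There is, however, one concrete error in your reduction: $G$ does \emph{not} act transitively on the $27$ sextactic points. Projectively $G$ has order $18$: taking the inflection point $(0,1,-1)$ as origin of the group law, $g_1$ and $g_2$ act as translations by $3$-torsion points (they have no fixed points on the curve when $t^3+27\neq 0$), while $g_0$ fixes $(0,1,-1)$ and acts as inversion. The sextactic points are exactly the $6$-torsion points that are not $3$-torsion, and since $-Q=Q+(-2Q)$ with $2Q$ of order dividing $3$, inversion preserves each of the three nontrivial cosets of the $3$-torsion subgroup inside the $6$-torsion. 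Hence $G$ has three orbits of nine sextactic points each --- which is precisely why the paper computes osculating conics at the three representatives $P_1$, $P_4$, $P_7$ rather than at a single point. Your claim that the orbit of $P_1$ reaches all $27$ points is therefore false, and executed as written your plan verifies part~(a) for only nine of the $27$ conics; you need three base computations per value of $t$, and the enumeration of $G$-orbits of pairs and triples for parts~(b) and~(c) must likewise be carried out relative to the three point-orbits. This is a bookkeeping error rather than a structural one --- your own step~(i), done honestly, would expose it --- but the transitivity assertion as stated leaves the verification incomplete.

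A smaller inaccuracy concerns the coefficient fields. By \eqref{eq:finalOP} the conics' coefficients are polynomials in $t$ and $z_i(\epsilon^{2k}t)$, where $z_i(\epsilon^{2k}t)$ is a root of $z^3+\epsilon^{2k}tz+2=0$; for $t=-5$ one already has $z^3-5z+2=(z-2)(z^2+2z-1)$, so $\sqrt{2}$ enters, and the conics at $P_2,P_3,\dots$ require roots of the twisted cubics over $\mathbb{Q}(\epsilon)$. So one must work in the splitting field of the three cubics $z^3+\epsilon^{2k}tz+2$ over $\mathbb{Q}(\epsilon)$ (respectively over $\mathbb{Q}(\epsilon,\sqrt{3})$ for the harmonic values), or equivalently compute with minimal polynomials, rather than over $\mathbb{Q}(\epsilon)$ alone; this affects only the setup, not the feasibility. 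The remaining points of your outline --- reducedness of the products (the osculating conics at non-inflection sextactic points are smooth, pairwise distinct, and distinct from the irreducible cubic) and reading the exponents off the minimal resolution --- are sound and match what the paper does.
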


The reader's convenience, we provide {\tt Singular} programs that compute the exponents of the plane curves defined by the equations $F \cdot C_i=0$, $F \cdot C_i \cdot C_j=0$, and $F \cdot C_i \cdot C_j \cdot C_k=0$ for all values of $t$ considered in this section. These programs are available at \cite{github}. The reader is encouraged to use these files to verify our computations and compare the numerical output with the results stated in the paper.

\newpage

\vskip 0.5 cm
\bigskip
\noindent
Ewelina Nawara,
Department of Mathematics, University of the National Education Commission, Podchor\c a\.zych 2, 30-084 Krak\'ow, Poland,\\
\nopagebreak
\textit{E-mail address:} \texttt{ewelina.nawara@op.pl}

\end{document}